\documentclass[10pt,a4paper]{amsart}


\usepackage{amsmath,amsfonts,amssymb,amsthm}



\newtheorem{theorem}{Theorem}[section]

\theoremstyle{definition}

\theoremstyle{remark}
\newtheorem{remark}[theorem]{Remark}

\numberwithin{equation}{section}



\newcommand{\nat}{\mathbb{N}}                                                   
\newcommand{\rzecz}{\mathbb{R}}                                                 
\newcommand{\zesp}{\mathbb{C}}                                                  
\DeclareMathOperator{\supp}{supp}                                               
\newcommand{\indyk}[1]{1\!\!\!\! 1_{#1}}                                        
\DeclareMathOperator{\MojeRe}{Re}                                               
\DeclareMathOperator{\MojeIm}{Im}                                               
\newcommand{\iloskal}[2]{\left<{#1}|{#2}\right>}                                
\newcommand{\cgw}{\textrm{C}^\ast}

\newmuskip\pFqmuskip

\newcommand*\pFq[6][8]{%
	\begingroup 
	\pFqmuskip=#1mu\relax
	\mathcode`\,=\string"8000
	\begingroup\lccode`\~=`\,
	\lowercase{\endgroup\let~}\pFqcomma
	{}_{#2}F_{#3}{\left[\genfrac..{0pt}{}{#4}{#5};#6\right]}%
	\endgroup
}
\newcommand{\pFqcomma}{\mskip\pFqmuskip}

\newcommand{\Lag}[3]{L_{#1}^{(#2)}\left(#3\right)}


\begin{document}

\title[]{Hypergroups and quantum Bessel processes of non-integer dimensions}

\author[W.Matysiak]{Wojciech Matysiak}
\address{Wydzia{\l} Matematyki i Nauk Informacyjnych\\
Politechnika Warszawska\\
ul. Ko\-szy\-ko\-wa 75\\
00-662 Warsaw, Poland} \email{matysiak@mini.pw.edu.pl}


\thanks{Partially supported by NCN grant 2012/05/B/ST1/00554.}

\subjclass[2010]{Primary: 60J25; secondary 43A62.}

\keywords{Quantum Bessel process, Bessel process, hypergroup, Gelfand pair.}

\date{\today}

\begin{abstract}
It is demonstrated how to use certain family of commutative hypergroups to provide a universal construction of Biane's quantum Bessel processes of all dimensions not smaller than 1. The classical Bessel processes $\textrm{BES}(\delta)$ are analogously constructed with the aid of the Bessel-Kingman hypergroups for all, not necessarily integer, dimensions $\delta\ge1$.
\end{abstract}

\maketitle

\section{Introduction}\label{s:intro}

A Bessel process with index $\mu$ is a real-valued diffusion with infinitesimal generator
\[
\frac{1}{2}\frac{\textrm{d}^2}{\textrm{d}x^2}+\frac{2\mu+1}{2x}\frac{\textrm{d}}{\textrm{d}x}.
\]
The dimension of the Bessel process with index $\mu$ is $\delta=2(\mu+1)$ and a common notation for the Bessel process with the dimension $\delta$ is $\textrm{BES}(\delta)$. It is well known that the notion of the Bessel process makes sense for any real number $\delta$ and that the Bessel processes of integer dimension $\delta\ge2$ are the radial parts of a Brownian motion in dimension $\delta$, that is they can be defined by taking the Euclidean
distance from the origin of the $\delta$-dimensional Brownian motion. It also is well known that the Bessel processes of negative indices $\mu$ differ in a number of ways from the Bessel processes which have $\mu\ge0$. 

Philippe Biane \cite{biane1996} introduced a curious analogue of the Bessel process, which he called the \emph{quantum Bessel process}, as an appropriately understood radial part of a noncommutative Brownian motion. Despite the name and the origin, the quantum Bessel process is a classical Markov process living in a subset of the real plane. (Its transition probabilities are listed in Section \ref{s:qBp}, see \eqref{main1}-\eqref{main5}.) 

Biane's construction is based on a Gelfand pair associated with the Heisenberg group $H$, realized as $H=\zesp^d\times\rzecz$. Unsurprisingly, the integer parameter $d$ appears in the formulas for the transition probabilities of the quantum Bessel process. It turns out, however, that these formulas themselves make perfect sense and define a Markov process for $d$ from a larger set (in particular, for some non-integer $d$). Thus the situation resembles the one with the usual Bessel process: taking the radial part of the Brownian motion results in the processes with the parameters, which do not cover the whole set of admissible parameters. (In contrast with the classical case, there is no natural notion of dimension associated with the noncommutative Brownian motion appearing in the construction of the quantum Bessel process.)

The purpose of this note is to present a way to extend Biane's construction onto a larger range of the parameter $d$. We emphasize that our point is not to check whether the extended process is correctly defined (which boils down to an easy computation) but to provide a uniform construction, similar in spirit to Biane's, in which the quantum Bessel processes with $d\in\nat$ arise as special cases.

The key idea of the extension is to replace the Heisenberg group with a hypergroup. A hypergroup is a locally compact space with a convolution product mapping each pair of points to a probability measure with compact support. The space is required to admit an involution that acts like the inverse operation in a group. The vehicle for our extension will be a concrete family of hypergroups, called the Laguerre hypergroups, which generalizes in a way the Gelfand pair associated with the Heisenberg group Biane's construction is based on. 

In addition to the analysis of the quantum Bessel process, we offer an analogous viewpoint on the classical Bessel process. Namely, we first present a construction of the $d$-dimensional $\textrm{BES}(d)$ Bessel process with $d\in\nat$, based on a Gelfand pair associated with the additive group $\rzecz^d$ and the special orthogonal group $\textrm{SO}(\rzecz^d)$. We do not claim novelty for this construction; as in the quantum Bessel process case, what seems to be new is an extension of the construction which covers the cases with non-integer dimensions $d$, 
the vehicle for which is the Bessel-Kingman hypergroup.

The rest of this note is structured as follows. Section \ref{s:qBp} contains a concise summary of Biane's construction of the quantum Bessel process; in Section \ref{s:bessel} we recall an analogous construction of the usual Bessel process (of integer dimension). In Section \ref{s:hypergroups} we provide basic definitions and terminology as well as some fundamental facts on commutative hypergroups and their harmonic analysis. The Bessel-Kingman hypergroups are presented in Section \ref{s:BesselKingman}, along with their application to an extension of the construction from Section \ref{s:bessel} onto non-integer dimensions. The extension of the quantum Bessel process is carried out in Section \ref{s:qbesselhyp}, after introducing the Laguerre hypergroups in Section \ref{s:Laguerre}. We conclude with Section \ref{s:conclremarks}, where we collect several miscellaneous remarks.

\subsection{Notation}
For a locally compact Hausdorff space $X$, let $M_b(X)$ denote the Banach space of bounded regular (complex) Borel measures with the total variation norm and $M^1(X)\subset M_b(X)$ the set of all probability measures. The set of bounded continuous functions on $X$ is $C_b(X)$. Point masses will be denoted by $\delta_x$. Lastly, $\mathbb{Z}_+=\nat\cup\{0\}=\{0,1,2,\ldots\}$.

\section{Biane's quantum Bessel process}\label{s:qBp}

In this section we briefly describe Biane's construction \cite{biane1996} of the quantum Bessel process.

Consider the $(2d+1)$-dimensional Heisenberg group realized as the set $H=\zesp^{d}\times\rzecz$ with the group law
$
(z,w)(z^\prime,w^\prime)=(z+z^\prime,w+w^\prime+\MojeIm\iloskal{z^\prime}{z}),
$
in which the inner product $\iloskal{\cdot}{\cdot}$ comes from $\zesp^d$ and $(z,w), (z^\prime,w^\prime)$ belong to $H$. It was observed in \cite{biane1996} that $\exp(t\psi)$ is positive definite on $H$ for $t>0$ when $\psi:H\to\zesp$ is defined as
\begin{equation*}
\psi(z,w)=-iw-\frac{|z|^2}{2},
\end{equation*}
so $Q_t:L^1(H)\to L^1(H)$ given for all positive $t$ as
$
Q_t f(z,w)=\exp[t\psi(z,w)]f(z,w),
$
determines uniquely a semigroup of (completely) positive contractions on the group $C^\ast$-algebra $C^\ast(H)$ of $H$ (see \cite[1.2.1 Proposition]{biane1996}). Though $C^\ast(H)$ is noncommutative (as $H$ is nonabelian), and $(Q_t)_t$ might be viewed as the semigroup of noncommutative Brownian motion on the dual of $H$ (see \cite{biane1996}), $(Q_t)_t$ gives rise to a classical Markov semigroup in the following way. The unitary group $U(d)$ acts on the Heisenberg group $H$ by automorphisms
\begin{equation}\label{U_action}
u.(z,w)=(uz,w),\quad u\in U(d), (z,w)\in H.
\end{equation}
It is known 
that the convolution algebra $L^1_\textrm{rad}(H)$, consisting of the functions from $L^1(H)$ which are ``radial'', that is invariant under action \eqref{U_action}, is commutative. This can equivalently be stated: $(U(d)\ltimes H,U(d))$ is a Gelfand pair. For a friendly introduction to Gelfand pairs and other notions appearing in this note in context of Gelfand pairs, the reader is directed to look at \cite{vandijk}.

Completing $L^1_\textrm{rad}(H)$ to the (commutative) $\cgw$-algebra $\cgw_\textrm{rad}(H)$ of radial elements of $\cgw(H)$, one can use the Gelfand-Naimark theorem to observe that the restriction of $(Q_t)_t$ to $\cgw_\textrm{rad}(H)$ generates a Markov semigroup on the spectrum of $\cgw_\textrm{rad}(H)$. The Gelfand spectrum of $\cgw_\textrm{rad}(H)$ can be identified with the set of spherical functions of the Gelfand pair $(U(d)\ltimes H,U(d))$, which are either of the form $\varphi^{(d)}_{\tau,m}=\varphi^{(d)}_{\tau,m}(z,w)$ with $\tau\ne0$ and $m\in\mathbb{Z}_+$, where
\begin{equation}\label{SphFun1stKind}
\varphi^{(d)}_{\tau,m}(z,w)=
\frac{m!\Gamma(d)}{\Gamma(m+d)}
\exp\left[i\tau w-\frac{1}{2}|\tau||z|^2\right]\Lag{m}{d-1}{|\tau||z|^2},
\end{equation}
or of the form $\varphi^{(d)}_{0,\mu}=\varphi^{(d)}_{0,\mu}(z,w)$ with $\mu>0$, where
\begin{equation}\label{SphFun2ndKind}
\varphi^{(d)}_{0,\mu}(z,w)=j_{d-1}(\mu|z|).
\end{equation}
Here and further on, $\Lag{k}{a}{x}=[(a+1)_k/k!]\mathstrut_1 F_1(-k;a+1;x)$ is the Laguerre polynomial,
$j_\nu$ denotes the normalized spherical Bessel function
$
j_\nu(z)=\Gamma(\nu+1)(z/2)^{-\nu}J_\nu(z),
$
and $J_\nu$ denotes the usual Bessel function of the first kind.

Moreover, the Gelfand spectrum of $\cgw_\textrm{rad}(H)$ can also be homeomorphically embedded in $\rzecz^2$, thus producing a classical Markov process, the quantum Bessel process, on a subset $\rzecz^2$. 

As it was argued in \cite{biane1996} (without a proof), the semigroup of the quantum Bessel process can be identified as the unique semigroup $(q_t)_t$ satisfying
\begin{equation}\label{rozpiskawsfer}
\exp\left[t\psi(g)\right]\phi(g^{-1})=\int \phi^\prime(g^{-1})q_t(\phi,\textrm{d}\phi^\prime)
\end{equation}
for all $t>0$ and $g=(z,w)\in H$ and $\phi$ in the set of all positive definite spherical functions for $(U(d)\ltimes H,U(d))$ (the integral is over this set too); clearly, the identification of $(q_t)_t$ via \eqref{rozpiskawsfer} requires providing a homeomorphism between the set of spherical functions and a subset of $\rzecz^2$. To maintain coherence with our results from Section \ref{s:qbesselhyp}, which rely on Rentzsch's parametrization 
of the dual of the Laguerre semigroup (see Section \ref{s:Laguerre}), we will present a slightly modified version of Biane's process introduced in \cite{biane1996}. The only difference between Biane's process from \cite{biane1996} and the presented version is that the latter was computed using the set
\begin{equation}\label{myHeisFan}
\left\{(0,\mu^2)\in\rzecz^2:\mu\ge0\right\}\cup\bigcup_{k\in\mathbb{Z}_+}\left\{(\tau,k|\tau|):\tau\in\rzecz\setminus\{0\}\right\},
\end{equation}
and the mapping $(\tau,k)\mapsto\varphi^{(d)}_{\tau,k}$ for $\tau\ne0$, $(0,\mu^2)\mapsto\varphi^{(d)}_{0,2\mu}$ for $\mu\ge0$ (the fact that this is a homeomorphism between \eqref{myHeisFan} and the set of spherical functions of $(U(d)\ltimes H,U(d))$ follows directly from \cite[Lemma 6.1]{Rentzsch1999} - see the end of Section \ref{s:Laguerre} of this note), 
instead of the subset of $\rzecz^2$ and the homeomorphism chosen by Biane (cf. \cite[Section 2.4]{biane1996}). 
The difference on semigroups level is hardly significant and boils down to a shift in a parameter. 

The dynamics of the slightly modified quantum Bessel process is described by the following transition semigroup $(q_t)_{t>0}$ on the set \eqref{myHeisFan} (cf. \cite[3.3.1 Proposition]{biane1996}):
	\begin{enumerate}
		
		\item If $\mathbf{x}=(s,k|s|)$, $s<0$, and $u=s+t<0$, then 
		\begin{equation}\label{main1}
		q_t(\mathbf{x},\textrm{d}\mathbf{y})=\sum_{l=k}^\infty\frac{\Gamma(d+l)}{\Gamma(d+k)(l-k)!}\left(\frac{u}{s}\right)^{d+k} \left(1-\frac{u}{s}\right)^{l-k} \delta_{(u,-lu)}(\textrm{d}\mathbf{y}).
		\end{equation}
		
		\item If $\mathbf{x}=(s,k|s|)$ with $s<0$ and $t=-s$, then
		\begin{equation}\label{main2}
		q_t(\mathbf{x},\textrm{d}\mathbf{y})=\frac{1}{\Gamma(d+k)}\exp\left(-\frac{y_1}{t}\right)\left(\frac{y_1}{t}\right)^{d+k-1}\frac{1}{t}
		\left(\delta_{0}\otimes\textrm{Leb}\right)(\textrm{d}\mathbf{y})
		\end{equation}
		for $\mathbf{y}=(y_0,y_1)$ from the set \eqref{myHeisFan} ($\textrm{Leb}$ denotes the one-dimensional Le\-bes\-gue me\-asure).
		
		\item If $\mathbf{x}=(s,k|s|)$, $s<0$, and $u=s+t>0$, then 
		\begin{equation}\label{main3}
		q_t(\mathbf{x},\textrm{d}\mathbf{y})=\sum_{l=0}^\infty \frac{\Gamma(d+k+l)}{\Gamma(d+k)l!}\left(\frac{u}{t}\right)^{d+k}\left(-\frac{s}{t}\right)^l \delta_{(u,lu)}(\textrm{d}\mathbf{y}).
		\end{equation}
		
		\item If $\mathbf{x}=(0,y_1)$ with $y_1\ge0$, then
		\begin{equation}\label{main4}
		q_t(\mathbf{x},\textrm{d}\mathbf{y})=\sum_{l=0}^\infty \frac{1}{l!}\left(\frac{y_1}{t}\right)^l\exp\left(-\frac{y_1}{t}\right)\delta_{(t,lt)}(\textrm{d}\mathbf{y}).
		\end{equation}
		
		\item If $\mathbf{x}=(s,k|s|)$ with $s>0$, and $u=s+t$, then
		\begin{equation}\label{main5}
		q_t(\mathbf{x},\textrm{d}\mathbf{y})=\sum_{l=0}^k \binom{k}{l}\left(\frac{s}{u}\right)^l\left(1-\frac{s}{u}\right)^{k-l}\delta_{(u,lu)}(\textrm{d}\mathbf{y}).
		\end{equation}
		
	\end{enumerate}
	
It is not difficult to observe that equations \eqref{main1}-\eqref{main5} define the semigroup of a Markov process when $d\in(0,\infty)$; we will denote the Markov process with the semigroup $(q_t)_t$ given by these equations, and living on \eqref{myHeisFan}, by $\textrm{QBES}(d)$, and we will call it \emph{the quantum Bessel process of dimension $d$}. 

\begin{remark}
The quantum Bessel process can alternatively be constructed from a birth and death process (the Yule process), with no reference to the noncommutative Brownian motion, Heisenberg group or Gelfand pairs.
\end{remark}

\begin{remark}
Since the first coordinate of the quantum Bessel process always follows a uniform motion to the right, it makes sense to denote the whole process as $(t,X_t)_t$. It can be shown in a way similar to Theorem 4.1 in \cite{matysiakswieca1} that the second coordinate $(X_t)_t$ after a slight rescaling becomes a particular case of quadratic harness (for the definition, see \cite{BMW1}), called classical bi-Poisson process, which was first introduced in \cite{brycwesolo3} and presented as a member of a larger class of bi-Poisson processes in \cite{BMW2}. 
\end{remark}
	

\section{A special view on Bessel processes}\label{s:bessel}

The name ``quantum Bessel process'', coined by Biane, is easily justifiable as the process described by \eqref{main1}-\eqref{main5} comes from the non-commutative (quantum) Brownian motion and it shares “the radiality” property with the ordinary Bessel process. To deepen the analogy between both kinds of Bessel processes, let us consider the following construction of the classical Bessel process (its detailed sketch can be found for example in \cite{biane1998}, but it generally is based on some well-known motives).

Let $G$ denote the abelian additive group $\rzecz^d$ and $|\cdot|$ the Euclidean norm on $\rzecz^d$. If $\psi:G\to[0,\infty)$, $\psi(x)=-|x|^2/2$ then it is well known that $G\ni x\mapsto \exp[t\psi(x)]$ is positive definite for all $t>0$, so it is easy to deduce that $Q_t f=\exp(t\psi)f$ defines a semigroup of positive contractions on the group algebra $L^1(G)$ of $G$. The semigroup can be extended in a natural way to the semigroup of positive contractions on the group $C^\ast$-algebra $C^\ast(G)$. 
The dual group $\widehat{G}$ is isomorphic to $\rzecz^d$. This, along with commutativity of $C^\ast(G)$ (which follows from the fact that $G$ is abelian) and the Gelfand-Naimark theorem, implies that $(Q_t)_t$ is the convolution semigroup that convolves functions with the measures with the Fourier transforms $\exp(-t|x|^2/2)$. In other words, $(Q_t)_t$ is the semigroup of Brownian motion on $\rzecz^d$.

A well-known fact is that $(\textrm{SO}(\rzecz^d)\ltimes\rzecz^d,\textrm{SO}(\rzecz^d))$ is a Gelfand pair; equivalently the convolution subalgebra $L^1_\textrm{rad}(\rzecz^d)$ of $\textrm{SO}(\rzecz^d)$-invariant (radial) functions from $L^1(\rzecz^d)$ is commutative. Another well-known fact is that the spherical functions of positive type for this Gelfand pair are given by
\begin{equation}\label{SphFunEucMot}
\eta_u^{(d)}(x)=j_{d/2-1}(u|x|),\ u>0
\end{equation}
for $x\in\rzecz^d$; hence the Gelfand spectrum of $(\textrm{SO}(\rzecz^d)\ltimes\rzecz^d,\textrm{SO}(\rzecz^d))$ is isomorphic to $(0,\infty)$. (Observe that $\eta_u^{(1)}(x)=\cos(ux)$ because $J_{-1/2}(z)=\sqrt{2/(\pi z)}\cos z$).

In order to find the restriction $(p_t)_t$ of the semigroup $(Q_t)_t$ to $C^\ast_\textrm{rad}(\rzecz^d)$ one can invoke the arguments based on \eqref{rozpiskawsfer}, from which it follows that $(p_t)_t$ can be identified as the unique semigroup $(p_t)_t$ satisfying
\begin{equation}\label{rozpiska1}
\exp[-t|x|^2/2]\eta_u^{(d)}(-x)=\int_0^\infty\eta_v^{(d)}(-x)p_t(u,\textrm{d}v)
\end{equation}
for all $u>0$ and $x\in\rzecz^d$. The identification relies on the formula
\begin{equation}\label{gradryz}
\int_0^\infty e^{-\alpha v^2} j_\nu(i\beta v)j_\nu(\gamma v)\frac{v^{2\nu+1}}{2^\nu\Gamma(\nu+1)}\textrm{d}v=
\frac{1}{(2\alpha)^{\nu+1}}\exp\left(\frac{\beta^2-\gamma^2}{4\alpha}\right)j_\nu\left(\frac{\beta\gamma}{2\alpha}\right),
\end{equation}
valid when $\MojeRe \nu>-1$ and $\MojeRe \alpha>0$ (see 
\cite[p. 395]{WatsonTreatiseBessel}). 
With $\nu=d/2-1$, $\alpha=(2t)^{-1}$, $\gamma=|x|$ and $\beta=u/t$ we easily recover the semigroup 
\begin{equation}\label{semibessel}
p_t(x,\textrm{d}y)=\frac{2y^{d-1}}{(2t)^{d/2}\Gamma(d/2)}j_{d/2-1}\left(\frac{ixy}{t}\right)\exp\left(-\frac{x^2+y^2}{2t}\right)
\indyk{(0,\infty)}{(y)}\textrm{d}y
\end{equation}
of the classical Bessel process $\textrm{BES}(d)$ of dimension $d$.

It is a standard fact that \eqref{semibessel} still properly defines a Markovian semigroup if $d$ is no longer integer-valued but takes on arbitrary positive values. Moreover, the condition $d>0$ is equivalent with the assumption on $\nu$ under which \eqref{gradryz} holds true. Nevertheless, in spite of the fact that \eqref{rozpiska1} by means of \eqref{gradryz} immediately gives \eqref{semibessel} for any $d>0$, one cannot extend directly  the above construction of the Bessel process onto arbitrary positive dimensions, as the construction relies on the Euclidean motion group $\textrm{SO}(\rzecz^d)\ltimes\rzecz^d$, which unavoidably requires $d$ to be a positive integer.

One of the aims of this note is to demonstrate that an extension of the above construction for all $d\ge1$ is possible if one replaces the Euclidean motion group with a certain family of hypergroups.

\section{Some elements of harmonic analysis on commutative hypergroups}\label{s:hypergroups}

A \emph{hypergroup} $(X,\ast)$ is a locally compact Hausdorff space $X$ together with a bilinear and associative multiplication $\ast$ (called \emph{convolution}) on $M_b(X)$ such that:
\begin{enumerate}
\item the mapping $(\mu,\nu)\mapsto \mu\ast\nu$ is weak-$^\ast$ continuous;
\item for all $x,y\in X$, the convolution $\delta_x\ast\delta_y$ is a compactly supported probability measure on $X$;
\item the map $(x,y)\mapsto\supp(\delta_x\ast\delta_y)$ from $X\times X$ into the space of all nonempty compact subsets of $X$ is continuous with respect to the Michael topology (for a definition, consult \cite{BloomHeyer1995});
\item there exists a neutral element $e\in X$ such that $\delta_e\ast\delta_x=\delta_x\ast\delta_e=\delta_x$ for all $x\in X$;
\item there exists a continuous involutive automorphism $x\mapsto \bar{x}$ on $X$ such that $\delta_{\bar{x}}\ast\delta_{\bar{y}}=(\delta_y\ast\delta_x)^{-}$ (if $\mu\in M_b(X)$ then $\mu^-$ is defined by $\mu^-(A)=\mu(\bar{A})$ for all Borel sets $A\subset X$), and $x=\bar{y}$ if and only if $e\in\supp(\delta_x\ast\delta_y)$.
\end{enumerate}



A locally compact group is an obvious example of a hypergroup; convolution is defined by $\delta_x\ast\delta_y=\delta_{xy}$ and the involution by $\bar{x}=x^{-1}$ (the group inverse of $x$).

A hypergroup $(X,\ast)$ is \emph{commutative} if the convolution $\ast$ is commutative. We collect here some basic facts 
concerning the harmonic analysis on commutative hypergroups - consult the monograph \cite{BloomHeyer1995} for a very thorough treatment. Up to the end of this section, $(X,\ast)$ will be a commutative hypergroup.

There exists a unique (up to normalization) Haar measure $\omega_X$ on $X$. This is a positive Radon measure such that for all $x\in X$
\[
\int f(y)\textrm{d}\omega_X(y)=\int f_x(y)\textrm{d}\omega_X(y),
\]
where
$
f_x(y):=f(x\ast y):=\int f\textrm{d}(\delta_x\ast\delta_y).
$

If $f$ and $g$ are some measurable functions on $X$, then one defines their convolution $f\ast g$ as
\[
f\ast g(x)=\int_X f(y) g(x\ast \bar{y})\textrm{d}\omega_X(y);
\]
the involution $f^\ast$ is defined as $f^\ast(x)=\overline{f(\bar{x})}$. These allow to consider $L^1(X,\omega_X)$ as a (commutative) Banach $\ast$-algebra.

Analogously to the dual of a locally compact abelian group, it makes sense to consider the dual space $X^\wedge$ of $X$
\[
X^\wedge=\left\{\chi\in C_b(X):\chi\not\equiv0,\ \chi(x\ast \bar{y})=\chi(x)\overline{\chi(y)}\ \forall\ x,y\in X\right\},
\]
which is a locally compact Hausdorff space with respect to the topology of uniform convergence on compact sets. The dual space $X^\wedge$, whose elements are 
called \emph{characters} of $X$, can be identified with the symmetric spectrum 
of 
$L^1(X,\omega_X)$. 
The symmetric spectrum is a locally compact Hausdorff space with respect to the weak$^\ast$-topology.

\begin{remark}\label{dualNOTdual}
As it is well known, every locally compact abelian group $G$ has a dual locally compact abelian group $\hat{G}$ and the dual of $\hat{G}$ is again $G$. One of the striking contrasts between hypergroups and group measure algebras is that the duality may fail in the hypergroup case: there exist commutative hypergroups $X$ such that $X^\wedge$ is not a hypergroup under pointwise multiplication (a hypergroup $X$ is called \emph{strong} if $X^\wedge$ is a hypergroup).
\end{remark}

The fundamental tools for the harmonic analysis on hypergroups are hypergroup Fourier transforms, defined in the following way. The Fourier transform of $f\in L^1(X,\omega_X)$ is
\[
\widehat{f}(\chi)=\int_X\overline{\chi(x)}f(x)\textrm{d}\omega_X(x),\quad\chi\in X^\wedge;
\]
the inverse Fourier transform of $\sigma\in M_b(X^\wedge)$ is defined on $X$ by
\[
\check{\sigma}(x)=\int_{X^\wedge}\chi(x)\sigma(\textrm{d}\chi);
\]
in particular, the inverse Fourier transform is injective. For a fixed Haar measure $\omega_X$ on $X$ there exists a unique positive Radon measure $\pi_X$ on $X^\wedge$ (called the Plancherel measure of $(X,\ast)$) such that $f\mapsto \widehat{f}$ extends to an $L^2$-isometry between $L^2(X,\omega_X)$ and $L^2(X^\wedge,\pi_X)$.

A \emph{positive definite} function on $X$ is a complex-valued, measurable and locally bounded function $f$ on $X$ such that
\[
\sum_{i=1}^{n}\sum_{j=1}^{n} c_i \overline{c_j} f(x_i\ast \overline{x_j})\ge 0
\]
for any choice of complex numbers $c_i$ and elements $x_i$ of $X$. Clearly, every character on $X$ is positive definite. However, unlike the group case, the product of two characters may not be positive definite 
(compare with Remark \ref{dualNOTdual}); if it however is, 
then the product of bounded continuous positive definite functions is positive definite. 

It is easy to see that if $\sigma$ is a bounded positive Radon measure on $X^\wedge$ then $\check{\sigma}$ is bounded and positive definite on $X$. Conversely, Bochner's theorem for commutative hypergroups \cite[4.1.16 Theorem]{BloomHeyer1995} asserts for any bounded, continuous and positive definite function $f$ the existence of a unique bounded positive Radon measure $\sigma$ on $X^\wedge$ such that $f=\check{\sigma}$ (clearly, if $f(e)=1$ then $\sigma$ is a probability measure).

%
%
%
%

\section{Bessel-Kingman hypergroups and extensions of Bessel processes}\label{s:BesselKingman}

Now we are in a position to introduce the Bessel-Kingman hypergroups, which will serve as a replacement for the Euclidean motion groups in the construction of the classical Bessel processes with arbitrary positive dimensions. 

Let $K=\rzecz_+$. For $\alpha>1$ we define the convolution $\ast_\alpha$ on $M^b(K)$ by
\[
f(x\ast_\alpha{x^\prime})=
\frac{\Gamma\left(\alpha/2\right)}{\sqrt{\pi}\Gamma((\alpha-1)/2)}\int_0^\pi f\left(\sqrt{x^2+{x^\prime}^2-2xx^\prime\cos\theta}\right)\sin^{\alpha-2}\theta\textrm{d}\theta
\]
for $x,x^\prime\in K$ and $f\in C_b(K)$; we also put
$
f(x\ast_1{x^\prime})=\left(f(x+x^\prime)+f(|x-x^\prime|)\right)/2.
$

This gives rise to a continuum of very well-studied commutative hypergroups $K_\alpha=(K,\ast_\alpha)$ with $\alpha\ge1$, known as the Bessel-Kingman hypergroups. They were discovered by J.F.C. Kingman in \cite{Kingman1963} (he did not use the term ``hypergroup''), in context of his research on spherically symmetric random walks.

The neutral element in each of the Bessel-Kingman hypergroups is 0, and the involution is the identity. The Haar measure of $K_\alpha$ is $\textrm{d}\omega_{K_\alpha}(x)=x^{\alpha-1}\textrm{d}x$ and the dual 
of $K_\alpha$ is 
$
K_\alpha^\wedge=\left\{\eta_u^{(\alpha)}:u\ge0\right\}
$
- recall \eqref{SphFunEucMot}. Therefore the associated hypergroup Fourier transform is given by a Hankel transform. As it can be derived immediately from the symmetry of $\eta_u^{(\alpha)}(x)$ in $u$ and $x$, the
Bessel-Kingman hypergroups are self-dual (meaning that the dual $K_\alpha^\wedge$ is isomorphic to $K_\alpha$), like the underlying groups 
$\rzecz^d$. It implies, in particular, that $K_\alpha^\wedge$ is a (dual) hypergroup (so the Bessel-Kingman hypergroups are strong), and that the product of bounded continuous positive definite functions on $K_\alpha$ is positive definite. 

Notice that the dual $K_\alpha^\wedge$ of $K_\alpha$ coincides with the Gelfand spectrum of the Gelfand pair 
$(\textrm{SO}(\rzecz^\alpha)\ltimes\rzecz^\alpha,\textrm{SO}(\rzecz^\alpha))$ when $\alpha\in\nat$. This observation is in fact a glimpse of a larger picture: in the special case when $\alpha$ is an integer, the hypergroups $K_\alpha$ can also be inherited from a group or a Gelfand pair, since $(M^b(K),\ast_\alpha)$ is isometrically isomorphic to the subalgebra of rotation invariant measures on $\rzecz^\alpha$. Thus $K_\alpha$ interpolate these in a sense.

Therefore they are perfectly suitable for extending the construction of the Bessel process $\textrm{BES}(\delta)$ described in Section \ref{s:bessel} onto all, not only integer, dimensions $\delta\ge1$. The details of the extension express some well-known facts in the language of the Bessel-Kingman hypergroups, and they go as follows. Let $\delta\ge1$. In parallel to Section \ref{s:bessel}, we first consider function $\psi(x)=-x^2/2$ on $K$ and define $Q_t:L^1(K_\delta,\omega_{K_\delta})\to L^1(K_\delta,\omega_{K_\delta})$ for $t>0$ by $Q_tf=\exp(t\psi)f$.  

\begin{theorem}\label{thm:BessKing}
There exists a unique semigroup $(q_t)_t$ of 
Markov kernels on $K_\delta^\wedge$ such that for all $\chi\in K_\delta^\wedge$ and $t>0$
\begin{equation}\label{imageFourier}
\widehat{(Q_t f)}(\chi)=\int_{K_\delta^\wedge} \widehat{f}(\chi^\prime)q_t(\chi,\textrm{d}\chi^\prime).
\end{equation}
The semigroup $(q_t)_t$ is exactly the semigroup of the $\delta$-dimensional Bessel process $\textrm{BES}(\delta)$.
\end{theorem}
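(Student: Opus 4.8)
The plan is to transport the defining relation \eqref{imageFourier} to the hypergroup itself, where it becomes a single pointwise identity that is then resolved by Bochner's theorem together with the classical integral formula \eqref{gradryz}. Write $\nu=\delta/2-1$. On $K_\delta$ the involution is the identity and each character $\eta_u^{(\delta)}(x)=j_{\delta/2-1}(ux)$ is real, so the hypergroup Fourier transform is the Hankel transform $\widehat f(\eta_u^{(\delta)})=\int_0^\infty j_{\delta/2-1}(ux)f(x)x^{\delta-1}\,\textrm{d}x$. Since $Q_tf=e^{t\psi}f$ with $e^{t\psi(x)}=e^{-tx^2/2}$ bounded, $Q_t$ is a contraction of $L^1(K_\delta,\omega_{K_\delta})$ and the left-hand side of \eqref{imageFourier} equals $\int_{K_\delta}\eta_u^{(\delta)}(x)\,e^{t\psi(x)}f(x)\,\textrm{d}\omega_{K_\delta}(x)$. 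Interchanging the order of integration on the right-hand side — which I would first justify on the dense class of compactly supported continuous $f$ and then extend by continuity — the identity \eqref{imageFourier} holds for every such $f$ if and only if
\[
\eta_u^{(\delta)}(x)\,e^{t\psi(x)}=\int_{K_\delta^\wedge}\chi(x)\,q_t(\eta_u^{(\delta)},\textrm{d}\chi)=\bigl(q_t(\eta_u^{(\delta)},\cdot)\bigr)^{\vee}(x)
\]
for all $x\in K_\delta$. Thus the whole statement reduces to producing, for each $u\ge0$ and $t>0$, a probability measure on $K_\delta^\wedge$ whose inverse Fourier transform is the function $x\mapsto\eta_u^{(\delta)}(x)\,e^{t\psi(x)}$.

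I would then settle existence, uniqueness and the Markov property of the kernel. The function $e^{t\psi}$ is bounded, continuous and positive definite on $K_\delta$: taking $\beta=0$ in \eqref{gradryz} exhibits $e^{-tx^2/2}$ as the inverse Fourier transform of a finite positive measure (a Gaussian against the Haar density), and $\check\sigma$ is positive definite whenever $\sigma$ is a positive measure. As $\eta_u^{(\delta)}$ is a bounded continuous positive definite character and $K_\delta$ is strong (self-dual), the product $\eta_u^{(\delta)}e^{t\psi}$ is again bounded, continuous and positive definite, and equals $1$ at the neutral element $0$. Bochner's theorem for commutative hypergroups \cite[4.1.16 Theorem]{BloomHeyer1995} then yields a unique measure $q_t(\eta_u^{(\delta)},\cdot)\in M^1(K_\delta^\wedge)$ with the displayed inverse transform, its total mass $1$ reflecting the value $\eta_u^{(\delta)}(0)e^{t\psi(0)}=1$; uniqueness across all candidate kernels is precisely the injectivity of the inverse Fourier transform. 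The semigroup (Chapman--Kolmogorov) property is inherited from $Q_sQ_t=Q_{s+t}$: composing the kernels and applying the characterization twice gives, for every $x$,
\[
\bigl(q_s\circ q_t(\eta_u^{(\delta)},\cdot)\bigr)^{\vee}(x)=e^{s\psi(x)}\bigl(q_t(\eta_u^{(\delta)},\cdot)\bigr)^{\vee}(x)=\eta_u^{(\delta)}(x)\,e^{(s+t)\psi(x)}=\bigl(q_{s+t}(\eta_u^{(\delta)},\cdot)\bigr)^{\vee}(x),
\]
so injectivity forces $q_s\circ q_t=q_{s+t}$.

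Finally, to identify $(q_t)_t$ with the $\textrm{BES}(\delta)$ semigroup I would evaluate \eqref{gradryz} with $\nu=\delta/2-1$, $\alpha=(2t)^{-1}$, $\beta=u/t$ and $\gamma$ the running argument. This simultaneously confirms existence of the representing measure and computes it explicitly as
\[
q_t(\eta_u^{(\delta)},\textrm{d}v)=\frac{2v^{\delta-1}}{(2t)^{\delta/2}\Gamma(\delta/2)}\,j_{\delta/2-1}\!\left(\frac{iuv}{t}\right)\exp\!\left(-\frac{u^2+v^2}{2t}\right)\indyk{(0,\infty)}{(v)}\,\textrm{d}v,
\]
whose inverse Fourier transform is $\eta_u^{(\delta)}e^{t\psi}$ and which is a genuine (jointly measurable) probability kernel, the integrand being positive since $j_{\delta/2-1}(iuv/t)=\Gamma(\delta/2)(uv/2t)^{-\nu}I_{\nu}(uv/t)>0$. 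This is exactly \eqref{semibessel} with $d=\delta$, the transition semigroup of $\textrm{BES}(\delta)$; note also that the displayed pointwise equation coincides with the characterizing relation \eqref{rozpiska1} of Section \ref{s:bessel}, which seals the identification. The only genuinely delicate points are the interchange of integration underlying the reduction and the positive definiteness of the product $\eta_u^{(\delta)}e^{t\psi}$, which is where the self-duality of the Bessel--Kingman hypergroup is indispensable; everything else is a direct application of Bochner's theorem and the Hankel-transform identity \eqref{gradryz}.
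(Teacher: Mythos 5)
Your argument is correct and follows essentially the same route as the paper: reduce \eqref{imageFourier} to the pointwise identity $\eta_u^{(\delta)}e^{t\psi}=(q_t(\eta_u^{(\delta)},\cdot))^{\vee}$, establish positive definiteness of $e^{t\psi}$ as the inverse Fourier transform of a Gaussian (Rayleigh) measure, use self-duality of $K_\delta$ to make the product positive definite, apply Bochner's theorem, and identify the kernel via \eqref{gradryz}. Your explicit verification of the Chapman--Kolmogorov property and of the positivity of the density are details the paper leaves implicit, not a different method.
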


\begin{proof}
From the definition of the Fourier transform it follows that
\begin{equation}\label{1}
\widehat{(Q_t f)}(\chi)
=\int_K f(x)\exp\left[t\psi(x)\right]\overline{\chi(x)}\textrm{d}\omega_{K_\delta}(x).
\end{equation}

Since $\exp(t\psi)$ is the inverse Fourier transform $\check{\sigma}_t$ of a Gaussian measure on $K_\delta$, which is given by the Rayleigh distribution (see \cite[Example 7.3.18]{BloomHeyer1995}), $K\ni x\mapsto\exp[t\psi(x)]$ is positive definite for any $t>0$ on the Bessel-Kingman hypergroup $K_\delta$. Consequently, $x\mapsto\exp[t\psi(x)]\overline{\chi(x)}$ is positive definite (recall the self-duality of the Bessel-Kingman hypergroups), and Bochner's theorem implies that it is the inverse Fourier transform of a unique measure $q_t(\chi,\cdot)$ on $K_\delta^\wedge$, which means that
\begin{equation}\label{glowne1}
\exp\left[t\psi(x)\right]\overline{\chi(x)}=\int_{{K_\delta^\wedge}} \chi^\prime(x)q_t(\chi,\textrm{d}\chi^\prime).
\end{equation}
We complete the proof of the unique existence by inserting \eqref{glowne1} into \eqref{1}, changing the order of integration and observing that the characters of the Bessel-Kingman hypergroups are real-valued.

Clearly, the semigroup $(q_t)_t$ can be recovered from \eqref{glowne1}. As $K_\delta^\wedge\cong[0,\infty)$, one immediately notices that \eqref{glowne1} differs from \eqref{rozpiska1} only by use of $\delta\ge1$ instead of the integer $d$. Since there is no need to assume in \eqref{gradryz} that $2\nu+2\in\nat$, we readily identify $(q_t)_t$ as the semigroup of the classical Bessel process of dimension $\delta\ge1$.

\end{proof}

\begin{remark}\label{rem:sufficient}
	Clearly, \eqref{glowne1} holding for all $\chi\in K_\delta^\wedge$, $x\in K$ and $t>0$ is sufficient for the existence of the unique semigroup $(q_t)_t$ from \eqref{imageFourier}. Additionally, as the Fourier transform for the Bessel-Kingman hypergroup is a Hankel transform, one can interpret \eqref{glowne1} in terms of an inverse Hankel transform; similarly, \eqref{gradryz} has an interpretation in terms of an (inverse) Hankel transform.
\end{remark}

\section{Laguerre hypergroups and quantum Bessel process}\label{s:Laguerre}

Now we shall describe Laguerre hypergroups (see e.g. \cite{Rentzsch1999} and references therein), which arise from the orbit hypergroup structure carried by the space of all $U(d)$-orbits in the Heisenberg group $H=\zesp^d\times\rzecz$.

Let $K=\rzecz_+\times\rzecz$. For $\alpha>0$ we define the convolution $\ast$ on $M^b(K)$ by
\begin{multline}\label{LagHypConv}
f((x,s)\ast_\alpha(x^\prime,s^\prime))\\
=\frac{\alpha}{\pi}\int_{0}^{2\pi}\int_0^1f\left(\sqrt{x^2+y^2+2xyr\cos\theta},s+t+xyr\sin\theta\right)r(1-r^2)^{\alpha-1}\textrm{d}r\textrm{d}\theta;
\end{multline}
for $\alpha=0$ we put
\begin{equation*}
f((x,s)\ast_\alpha(x^\prime,s^\prime))
=\frac{1}{2\pi}\int_0^{2\pi}f\left(\sqrt{x^2+y^2+2xy\cos\theta},s+t+xy\sin\theta\right)\textrm{d}\theta;
\end{equation*}
in both cases $(x,s),(x^\prime,s^\prime)\in K$ and $f\in C_b(K)$. 

It is a well known fact that $K_\alpha=(K,\ast_\alpha)$ is a commutative hypergroup for $\alpha\ge0$ -- the \emph{Laguerre hypergroup (of order $\alpha$)}. The involution is given by $(x,s)\mapsto(x,-s)$, and the neutral element is $(0,0)$.

One can look upon the Laguerre hypergroups as a natural extension of the Gelfand pairs $(U(d)\ltimes H,U(d))$ discussed in Section \ref{s:qBp}, onto the case of non-integer $d\ge1$. This viewpoint is justified in particular by the comparison of the structure of the dual $K_\alpha^\wedge$ of $K_\alpha$, with the structure of the Gelfand spectrum of $(U(d)\ltimes H,U(d))$ (see Section \ref{s:qBp}). Namely, there are two kinds of characters in the dual of $K_\alpha$. The \emph{characters of the first kind} are given by the functions $\varphi^{(\alpha)}_{\tau,m}=\varphi^{(\alpha)}_{\tau,m}(x,w)$ (recall \eqref{SphFun1stKind})
with $(x,w)\in K$, $\tau\ne0$ and $m\in\mathbb{Z}_+$. The \emph{characters of the second kind} are $\varphi^{(\alpha)}_{0,\mu}=\varphi^{(\alpha)}_{0,\mu}(x,w)$ (recall \eqref{SphFun2ndKind}), with $\mu\ge0$.
Thus, modulo the fact that $\alpha$ is not integer, the characters from $K_\alpha^\wedge$ coincide with the bounded spherical functions of the Gelfand pair built on the Heisenberg motion group. 

We shall use Rentzsch's (see \cite[(6.1),(6.2)]{Rentzsch1999}) specification of the dual $K_\alpha^\wedge$ of $K_\alpha$. Abusing the notation a bit, we will consider $K_\alpha^\wedge$ as the set given in \eqref{myHeisFan},
with the following identification of the characters with the points from $K_\alpha^\wedge$:
\begin{equation}\label{ident}
\chi_{(\tau,k|\tau|)}=\varphi^{(\alpha)}_{\tau,k}\textrm{ for }\tau\ne0, \textrm{ and }\ 
\chi_{(0,\mu^2)}=\varphi^{(\alpha)}_{0,2\mu}\textrm{ for }\mu\ge0
\end{equation}
(the same as in Section \ref{s:qBp}). As it was proved in \cite[Lemma 6.1]{Rentzsch1999}, \eqref{ident} defines an isomorphism between the dual $K_\alpha^\wedge$ and the subset of $\rzecz^2$ described by \eqref{myHeisFan}.

\section{Extension of quantum Bessel process}\label{s:qbesselhyp}

We are now in a position to demonstrate how the Laguerre hypergroups can be used to construct the quantum Bessel processes $\textrm{QBES}(\delta)$ with any $\delta\ge1$. To this end, we fix $\delta\ge1$ and let $K_{\delta-1}=(K,\ast_{\delta-1})$ be the Laguerre hypergroup of order $\delta-1$. As in Section \ref{s:qBp}, we define $\psi:K\to\rzecz$ by
$
\psi(x,w)=-iw-x^2/2
$
for $(x,w)\in K$,
and $Q_t:L^1(K_{\delta-1},\omega_{K_{\delta-1}})\to L^1(K_{\delta-1},\omega_{K_{\delta-1}})$ by $Q_t f=\exp(t\psi)f$ for $t\ge0$. 

\begin{theorem}\label{thm:main}
There exists a unique semigroup $(q_t)_t$ of 
Markov kernels on the dual $K_{\delta-1}^\wedge$ of the Laguerre hypergroup, which is generated, as in \eqref{imageFourier}, by the image of the semigroup $(Q_t)_t$ under the Fourier transform.
	
Under 
\eqref{ident}, the semigroup $(q_t)_t$ is exactly the semigroup of the quantum Bessel process $\textrm{QBES}(\delta)$.
\end{theorem}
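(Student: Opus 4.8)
The plan is to mirror the proof of Theorem~\ref{thm:BessKing} step by step, treating the complex-valued characters and, above all, the \emph{absence} of self-duality of the Laguerre hypergroups with extra care. First I would unwind the definition of the hypergroup Fourier transform to obtain
\begin{equation*}
\widehat{(Q_t f)}(\chi)=\int_K f(x,w)\exp[t\psi(x,w)]\,\overline{\chi(x,w)}\,\textrm{d}\omega_{K_{\delta-1}}(x,w),
\end{equation*}
so that, exactly as with \eqref{glowne1}, the whole statement reduces to expanding the function $(x,w)\mapsto\exp[t\psi(x,w)]\overline{\chi(x,w)}$ over the dual. Once such an expansion $\exp[t\psi]\overline{\chi}=\int_{K_{\delta-1}^\wedge}\chi^\prime\,q_t(\chi,\textrm{d}\chi^\prime)$ is available, with $q_t(\chi,\cdot)$ a probability measure, I would substitute it into the display, interchange the order of integration, and use both that the conjugate of a character is again a character and the identity $\int_K f\chi^\prime\,\textrm{d}\omega_{K_{\delta-1}}=\widehat{f}(\overline{\chi^\prime})$; transporting the kernel by the involution $\chi^\prime\mapsto\overline{\chi^\prime}$ of $K_{\delta-1}^\wedge$ then yields \eqref{imageFourier}. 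Uniqueness follows, as in Theorem~\ref{thm:BessKing}, from the uniqueness clause of Bochner's theorem \cite[4.1.16 Theorem]{BloomHeyer1995}, i.e.\ from injectivity of the inverse Fourier transform.

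The first ingredient is the positive definiteness of $(x,w)\mapsto\exp[t\psi(x,w)]$ on $K_{\delta-1}$ for every $t>0$, which I would obtain by identifying this function as the inverse Fourier transform of the Gaussian (heat) measure of the Laguerre hypergroup, the factor $\exp(-tx^2/2)$ being the genuine Gaussian part and the unitary factor $\exp(-itw)$ accounting for the central variable. This positivity is the hypergroup counterpart of Biane's positivity of $\exp(t\psi)$ on $H$ \cite{biane1996}, and it is the $\chi\equiv1$ instance of the expansion sought above.

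The main obstacle is the next step: showing that the product $\exp[t\psi]\overline{\chi}$ is itself positive definite. In Theorem~\ref{thm:BessKing} this was immediate, because self-duality of $K_\delta$ forces products of bounded continuous positive definite functions to remain positive definite; the Laguerre hypergroups are, however, neither self-dual nor \emph{a priori} strong (recall Remark~\ref{dualNOTdual}), so that shortcut is unavailable. My plan is therefore to produce the character expansion \emph{explicitly}, which simultaneously proves positive definiteness (a superposition of characters against a nonnegative measure is positive definite) and identifies the kernel. Inserting the form \eqref{SphFun1stKind} of a first-kind character $\chi_{(\tau,k|\tau|)}$, the product equals $\exp[-i(\tau+t)w]\exp[-\tfrac{1}{2}(|\tau|+t)x^2]$ times the Laguerre factor of $\chi$ evaluated at $|\tau|x^2$, and the task is to re-expand this over the characters carried by the shifted frequency $u=\tau+t$. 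The classical connection and generating-function identities for Laguerre polynomials supply precisely the (negative-)binomial weights of \eqref{main1}, \eqref{main3} and \eqref{main5}, the regime being dictated by the sign of $u$; the passage through $u=0$, where first-kind characters degenerate into the second-kind characters $\varphi^{(\delta-1)}_{0,\mu}$ of \eqref{SphFun2ndKind}, is controlled by the Bessel integral \eqref{gradryz} and the Laguerre-to-Bessel limit and yields the Poisson weights of \eqref{main2}, while for a second-kind starting character the same integral \eqref{gradryz} produces \eqref{main4} directly. All the coefficients so obtained are manifestly nonnegative and sum to one, so Bochner's theorem applies and $q_t(\chi,\cdot)$ is the transition probability of $\textrm{QBES}(\delta)$ under \eqref{ident}.

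Finally I would assemble these computations, noting that the argument is the Laguerre-hypergroup avatar of Biane's defining relation \eqref{rozpiskawsfer}, with $g^{-1}$ replaced by the involution $(x,w)\mapsto(x,-w)$ and the spherical functions replaced by the characters of Section~\ref{s:Laguerre}; the index shift between the hypergroup order $\delta-1$ and the process dimension $\delta$ is exactly the one built into \eqref{SphFun1stKind} and \eqref{ident}. I expect the only genuine labour to be the case analysis hidden in the Laguerre connection formula together with the careful treatment of the boundary between characters of the first and second kinds.
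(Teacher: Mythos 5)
Your proposal follows essentially the same route as the paper: reduce to the explicit character expansion \eqref{glowne3}, obtain uniqueness from injectivity of the inverse hypergroup Fourier transform, and establish existence together with the identification of \eqref{main1}--\eqref{main5} by a case-by-case computation with classical Laguerre/Bessel identities (the paper draws these from Ismail's book --- connection formula, generating functions, and the Hankel transform of Laguerre polynomials --- rather than from \eqref{gradryz}, but the tools are interchangeable), so that positive definiteness is obtained constructively exactly as you suggest. The only slip is cosmetic: the weights in \eqref{main2} form a gamma density, not a Poisson distribution (the Poisson weights belong to \eqref{main4}), though you do assign both formulas to the correct cases.
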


\begin{proof}

We need to prove 
existence of such $(q_t)_t$ that for all $f\in L^1(K_{\delta-1},\omega_{K_{\delta-1}})$ and $\mathbf{x}\in K_{\delta-1}^\wedge$ (recall 
\eqref{ident})
\begin{equation}\label{1Dec1}
\widehat{(Q_t f)}(\chi_{\mathbf{x}})=\int_{K^\wedge_{\delta-1}} \widehat{f}(\chi_{\mathbf{y}})q_t(\mathbf{x},\textrm{d}\mathbf{y})
\end{equation}
 -- uniqueness of $(q_t)_t$ follows immediately from the injectivity of the inverse hypergroup Fourier transform. Therefore, modifying appropriately the proof of Theorem \ref{thm:BessKing}, one immediately gets that Theorem \ref{thm:main} will be proved once it is shown that $(q_t)_t$ satisfying
 \begin{equation}\label{glowne3}
 \exp\left[t\psi(x,w)\right]\chi_\mathbf{x}(x,-w)=\int_{K_{\delta-1}^\wedge}\chi_\mathbf{y}(x,-w){q_t}(\mathbf{x},\textrm{d}\mathbf{y})
 \end{equation}
is the semigroup of the quantum Bessel process $\textrm{QBES}(\delta)$, given by \eqref{main1}-\eqref{main5} with $d$ replaced by $\delta$.


The rest of the proof consists of a careful analysis of \eqref{glowne3} in all the cases leading to \eqref{main1}-\eqref{main5} (with $\delta$ instead of $d$), and is based on the same general approach laid out in \cite{matysiakswieca1}. But, for completeness, we outline it here (see also Remark \ref{rem:altProof}). 

First we aim to prove \eqref{main1}. If $\mathbf{x}=(s,k|s|)$ with $s<0$ and $u=s+t<0$ then the LHS of \eqref{glowne3} equals 
$[k!/(\delta)_k]\exp(-iuw+{ux^2}/{2})\exp(-tx^2)\Lag{k}{\delta-1}{-sx^2}$.
Therefore \eqref{main1} follows from the identity
	\[
	\sum_{i=0}^\infty\frac{(i+j)!}{i!j!}\Lag{i+j}{\alpha}{v}\tau^i=(1-\tau)^{-\alpha-1-j}\exp\left(-\frac{v\tau}{1-\tau}\right)\Lag{j}{\alpha}{\frac{v}{1-\tau}}
	\]
	(one can find it, for example, in \cite[(4.6.34), p.104]{Ismail2009}), via the substitutions $\alpha=\delta-1$, $v=-ux^2$, $\tau=-t/s$,  and some straightforward manipulations.
	
	Now we will prove \eqref{main2}. From the identity
	\[
	k!\Lag{k}{\alpha}{u}=u^{-\alpha/2}\int_0^\infty e^{u-v} v^{k+\alpha/2} J_\alpha(2\sqrt{uv})\textrm{d}v
	\]
	(see \cite[(4.6.42), p.105]{Ismail2009}), it follows that the LHS of \eqref{glowne3}, which is in this case equal to
	$
	({k!}/{(\delta)_k})\exp({sx^2})\Lag{k}{\delta-1}{-sx^2},
	$
	can be written, using $\alpha=\delta-1$ and $u=-sx^2$ and a linear change of variable, as
	\[
	\int_0^\infty \Gamma(\delta)J_{\delta-1}(2\sqrt{r}x)\left(\sqrt{r}x\right)^{1-\delta}\cdot\frac{1}{\Gamma(\delta+k)}e^{r/s}\left(-\frac{r}{s}\right)^{k+\delta-1}\left(-\frac{1}{s}\right)
	\textrm{d}r.
	\]
	Due to our specification of the dual of the Laguerre hypergroup, this proves \eqref{main2}.

	Now let us turn to proving \eqref{main3}. To this end, observe that the formula \cite[(4.6.31), p.103]{Ismail2009}
	\[
	\sum_{l=0}^\infty \frac{(c)_l}{(\alpha+1)_l}\Lag{l}{\alpha}{v}\tau^l=(1-\tau)^{-c}\pFq{1}{1}{c}{\alpha+1}{-\frac{v\tau}{1-\tau}},
	\]
	together with the identity $\mathstrut_1 F_1(a;b;z)=\exp(z)\mathstrut_1 F_1(b-a;b;-z)$ \cite[(1.4.11), p.13]{Ismail2009},
	and the definition of the Laguerre polynomials, when specified for $c=\alpha+1+k$, yield
	\begin{equation}\label{BlackFriday1}
	\frac{k!}{(\alpha+1)_k}\exp\left(-\frac{v\tau}{1-\tau}\right)\Lag{k}{\alpha}{\frac{v\tau}{1-\tau}}=(1-\tau)^{\alpha+1+k}
	\sum_{l=0}^\infty \frac{(\alpha+1+k)_l}{(\alpha+1)_l}\Lag{l}{\alpha}{v}{\tau^l}.
	\end{equation}
	Since the LHS of \eqref{glowne3} for $\mathbf{x}=(s,|k|s)$, $s<0$, and $u=s+t>0$, is 
	\[
	\frac{k!\Gamma(\delta)}{\Gamma(k+\delta)}\exp\left(-iuw-\frac{ux^2}{2}\right)\exp(sx^2)\Lag{k}{\delta-1}{-sx^2},
	\]
	\eqref{main3} follows by plugging $\alpha=\delta-1$, $\tau=-s/t$ and $v=ux^2$ into \eqref{BlackFriday1}.
	
	If the assumptions of \eqref{main4} hold then the specification of the dual we use implies that the LHS of \eqref{glowne3} is
	$
	\exp\left[t\left(-iw-x^2/2\right)\right]\Gamma(\delta)J_{\delta-1}(2x\sqrt{y_1})(x\sqrt{y_1})^{1-\delta}.
	$
	Since the RHS of the identity
	\begin{equation}\label{BlackFriday2}
	\sum_{l=0}^{\infty}\frac{\Lag{l}{\alpha}{v}}{(\alpha+1)_l}\tau^l=\exp(\tau)\ \pFq{0}{1}{-}{\alpha+1}{-v\tau}
	\end{equation}
	(see e.g. \cite[(4.6.30), p.103]{Ismail2009}) is easily seen to be equal to $\exp(\tau)j_\alpha(2\sqrt{v\tau})$,
	putting $\alpha=\delta-1$, $v=tx^2$ and $\tau=y_1/t$ into \eqref{BlackFriday2}, one immediately arrives at \eqref{main4}.

	We proceed to justify \eqref{main5}. Since in this case the LHS of \eqref{glowne3} is equal to 
	\[
	\frac{k!}{(\delta)_k}\exp\left(-iuw-\frac{ux^2}{2}\right)\Lag{k}{\delta-1}{sx^2},
	\]
	it suffices to apply the identity
	\[
	\Lag{k}{\alpha}{cv}=(\alpha+1)_k\sum_{l=0}^k\frac{c^l(1-c)^{k-l}}{(k-l)!(\alpha+1)_l}\Lag{l}{\alpha}{v}
	\]
	(which can be found in \cite[(4.6.23), p.103]{Ismail2009}) with $\alpha=\delta-1$, $v=ux^2$ and $c=s/u$, to obtain \eqref{main5} readily.
	
	We have therefore identified $(q_t)_t$ from \eqref{glowne3} as the semigroup of $\textrm{QBES}(\delta)$. The proof is complete.
\end{proof}

\section{Concluding remarks}\label{s:conclremarks}

\begin{remark}
Although it seems 
possible to carry the extensions from sections \ref{s:BesselKingman} and \ref{s:qbesselhyp} without a direct appeal to hypergroup theory, well-developed harmonic analysis on commutative hypergroups provides all tools needed to accomplish our tasks, so we decided to stick to this convenient setup.

Let us note, however, that another approach for the extension of the quantum Bessel process might be based on a commutative algebra constructed by Stempak \cite{Stempak1988} by his careful analysis of a generalization of the sublaplacian on the Heisenberg group. In fact, results from \cite{Stempak1988} immediately lead to the Laguerre hypergroup structure.
\end{remark}

\begin{remark}\label{rem:altProof}
It is worth mentioning that, although Biane suggests (see \cite[p.95]{biane1996}) a possibility of deriving formulas \eqref{main1}-\eqref{main5} from some properties of the Laguerre polynomials and Bessel functions (presumably the ones we used in the proof of Theorem \ref{thm:main}), his proof of \eqref{main1}-\eqref{main5} (with $d\in\nat$) is completely different (\emph{``closer to the spirit of quantum probability''}, as Biane put it in \cite{biane1996}) from our proof of Theorem \ref{thm:main}, and does not seem to carry over directly to the case $d\in[1,\infty)$. 
\end{remark}

\begin{remark}\label{rem:BesselnotExtend}
In Section \ref{s:BesselKingman} we constructed the classical Bessel processes $\textrm{BES}(\delta)$ of all dimensions $\delta\ge1$. One can wonder whether it is possible to provide a similar construction of the Bessel processes $\textrm{BES}(\delta)$ with $\delta<1$. However, it needs to be noted that the Bessel-Kingman hypergroups $K_\delta$, as described in Section \ref{s:BesselKingman}, make sense for $\delta\ge1$ only. The reason for that is hidden (see the proof of Theorem 1 in \cite{Kingman1963}) in the range of the parameter $\nu$ in Gegenbauer's product formula 
\[
\frac{J_\nu(x)J_\nu(y)}{(xy)^\nu}=\frac{1}{2^\nu\Gamma(\nu+1/2)\sqrt{\pi}}
\int_0^\pi\frac{J_\nu(\sqrt{x^2+y^2-2xy\cos\theta})}{(x^2+y^2-2xy\cos\theta)^{\nu/2}}\sin^{2\nu}(\theta)\textrm{d}\theta,
\]
valid for $x,y>0$ and $\nu>-1/2$, which is equivalent (with $\nu=\delta/2-1$) to the associativity of the Bessel-Kingman convolution $\ast_\delta$ for $\delta>1$; when $\nu=-1/2$, Gegenbauer's formula degenerates to the well-known trigonometric formula for the product of cosines.
\end{remark}

\begin{remark}
A situation similar, though not identical, to the one described in Remark \ref{rem:BesselnotExtend} occurs in the case of quantum Bessel processes and Laguerre hypergroups. The construction from Section \ref{s:qbesselhyp} is valid for all $\delta\ge1$, while it is not difficult to check that formulas \eqref{main1}-\eqref{main5} correctly define $\textrm{QBES}(\delta)$ processes for all $\delta>0$. The question whether one can construct these processes in a similar manner to what was demonstrated in Section \ref{s:qbesselhyp} is open. 

Let us notice however that the Laguerre hypergroups $K_{\delta-1}$ do not seem to be the right tool for the construction of $\textrm{QBES}(\delta)$ processes with $\delta<1$ as they make sense for $\delta\ge1$ only (the weight from the right hand side of \eqref{LagHypConv} is integrable only if $\alpha>0$). The range of the parameter $\nu$ in Watson's formula 
\begin{multline*}
\Lag{k}{\nu}{x^2}\Lag{k}{\nu}{y^2}=\frac{2^{\nu-1/2}\Gamma(\nu+k+1)}{\Gamma(k+1)\sqrt{\pi}}\\
\times
\int_0^\pi e^{xy\cos\theta}
\frac{J_{\nu-1/2}(xy\sin\theta)}{(xy\sin\theta)^{\nu-1/2}}
\Lag{k}{\nu}{x^2+y^2-2xy\cos\theta}\sin^{2\nu}\theta\textrm{d}\theta
\end{multline*}
(valid for $\MojeRe\nu>-1/2$, $x,y\in\rzecz$ and $k\in\nat$), on which the definition of the Laguerre hypergroup is based (see \cite{Koornwinder1977} or \cite{Stempak1988}), is slightly wider: setting $\nu=\delta-1$, as is needed to define the Laguerre hypergroup $K_{\delta-1}$, one gets $\delta>1/2$. It seems worth recalling that Watson's formula holds for $\nu\in(-1/2,0)$, or equivalently $\delta\in(1/2,1)$, by analytic continuation.

\end{remark}

\begin{remark}
Some multidimensional versions of Biane's quantum Bessel processes were obtained in \cite{matysiakswieca1} and \cite{matysiakswieca2}.  Thanks to the work of Voit \cite{Voit2011}, who introduced certain multidimensional hypergroups related to the Heisenberg groups, it seems possible to generalize some of the multidimensional quantum Bessel processes in the spirit of this paper. Analogously, one may wonder whether the results of R\"{o}sler \cite{Rosler2007} can be applied to obtain some multidimensional versions of the Bessel processes (see also \cite{Voit2009}) -- this is an area of our future work.
\end{remark}

\subsection*{Acknowledgment} We thank Jacek Weso{\l}owski, Marcin {\'S}wieca and Kamil Szpojankowski for helpful discussions concerning this work, and Walter Bloom and Herbert Heyer for some bibliographical information. We gratefully acknowledge insightful remarks of Krzysztof Stempak regarding certain possibilities of the Laguerre hypergroup's extensions. It is a pleasure to thank the referee for a careful examination of the paper, and
for many insightful suggestions and corrections. 

\bibliographystyle{amsplain}
\bibliography{../../WMbib}

\end{document}